 \newtheorem{thm}{Theorem}[section]
 \newtheorem{cor}[thm]{Corollary}
 \newtheorem{lem}[thm]{Lemma}
 \newtheorem{prop}[thm]{Proposition}
 \theoremstyle{definition}
 \newtheorem{defn}[thm]{Definition}
 \newtheorem{nott}{Notation}
 \newtheorem{rem}{Remark}
  \DeclareMathAlphabet{\mathsfsl}{OT1}{cmss}{m}{sl}
    \newcommand{\FH}{\mathfrak{H}}
 \newcommand{\Cnum}{\mathbb{C}}
 \newcommand{\Nnum}{\mathbb{N}}
 \newcommand{\mi}{\mathrm{i}}
 \newcommand{\dif}{\mathrm{d}}
 \newcommand{\abs}[1]{\left\vert#1\right\vert}
 \newcommand{\set}[1]{\left\{#1\right\}}
 \newcommand{\norm}[1]{\left\Vert#1\right\Vert}
 \newcommand{\innp}[1]{\langle {#1}\rangle}
\title{Product Formula, Independence and Asymptotic Moment-Independence for Complex Multiple Wiener-It\^{o} Integrals }
\author{\rm\small
\noindent CHEN Yong\\
\noindent \footnotesize School of Mathematics and Computing Science, Hunan
University of Science and Technology,\\
\noindent \footnotesize Xiangtan, Hunan, 411201,
P.R.China. zhishi@pku.org.cn\\
}
\date{}
\begin{document}
\maketitle
\maketitle \noindent {\bf Abstract } We present a product formula for complex multiple Wiener-It\^{o} integrals. As applications, we show \"{U}st\"{u}nel-Zakai independence criterion and the Nourdin-Rosi\'{n}ski asymptotic moment-independent criterion for complex multiple Wiener-It\^{o} integrals.\\
\vskip 0.1cm
 \noindent  {\bf Keywords:\,\,} Complex Multiple Wiener-It\^{o} Integrals; Product Formula; \"{U}st\"{u}nel-Zakai independence criterion; Asymptotic Moment-Independent.\\
\vskip 0.1cm
 \noindent  {\bf MSC:\,\,}60H05, 60H07, 60G15.
\maketitle
\section{Introduction}
The product formula of real multiple Wiener-It\^{o} integrals is well known. Using this formula there are many interesting findings such as \"{U}st\"{u}nel-Zakai independence criterion \cite{klb,uzk} for two multiple Wiener-It\^{o} integrals, Nourdin-Rosi\'{n}ski asymptotic moment-independence criterion between blocks consisting of multiple Wiener-It\^{o} integrals \cite{nr} and Fourth Moment Theorem (or say: Nualart-Peccati criterion) of a normalized sequence of real multiple Wiener-It\^{o} integrals in a fixed Wiener chaos \cite{np,NuaPec}.

Both real multiple Wiener-It\^{o} integrals and complex multiple Wiener-It\^{o} integrals (see \cite{ito} or Section~\ref{plr} below) were established by It\^{o} K. almost at the same time \cite{ito2,ito}. However, the product formula of complex multiple Wiener-It\^{o} integrals is still unknown up to now as far as we know. The key aim of this paper is to answer this question (see Theorem~\ref{pdt fml}). As far as we know, there exist at least three different approaches to prove the product formula. In this paper, we adopt the most simple one by using the relationship between complex multiple Wiener-It\^{o} integrals and complex Hermite polynomials given by It\^{o} \cite{ito}.

As applications, we will show \"{U}st\"{u}nel-Zakai independence criterion, i.e., a necessary and sufficient condition on the pair of kernels $(f,g)$ is derived under which the complex multiple Wiener-It\^{o} integrals $I_{a,b}(f),\,I_{c,d}(g)$ are independent (see Theorem~\ref{thm2}). In the finial, by using the connection between real Wiener-It\^{o} integrals and complex Wiener-It\^{o} integrals \cite[Theorem 3.3]{cl2}, we list two related results as an appendix: Nourdin-Rosi\'{n}ski asymptotic moment-independence criterion and joint convergence criterion for d-dimensional vectors consisting of complex multiple Wiener-It\^{o} integrals (see Theorem~\ref{thm3}, Corollary~\ref{cor4}).
\section{Preliminaries}\label{plr}
In this section,  we shortly recall the theory of complex multiple Wiener-It\^{o} integrals of It\^{o} \cite{ito}.
 Consider a triple $(T,\mathcal{B},\mu)$, where the measure $\mu$ is positive, $\sigma$-finite and non-atomic. $\FH=L^2(T,\mathcal{B},\mu)$ is a complex separable Hilbert space. A complex Gaussian random measure over $(T,\mathcal{B})$, with control $\mu$, is a centered complex Gaussian family of the type
   \begin{equation*}
     \mathbf{M}=\set{M(B):B\in \mathcal{B},\mu(B)<\infty},
   \end{equation*}
 such that, for every $B, C\in \mathcal{B}$ with finite measure,
   \begin{equation*}
      E[M(B)\overline{M(C)}]=\mu(B\cap C).
   \end{equation*}

   \begin{nott}
For a fixed $(p,q)$, suppose that $f\in \FH^{\otimes (p+ q)}$. $\hat{f}$ is the symmetrization of $f$ in the sense of It\^{o} \cite{ito}:
\begin{equation}\label{ito-sense}
   \tilde{f}(t_1,\dots,t_{p+q})=\frac{1}{p!q!}\sum_{\pi}\sum_{\sigma}f(t_{\pi(1)},\dots,t_{\pi(p)},t_{\sigma(1)},\dots,t_{\sigma(q)}),
\end{equation}
where $\pi$ and $\sigma$ run over all permutations of $(1,\dots,p)$ and $(p+1,\dots, p+q)$ respectively.
Denote by $\FH^{\odot p}\otimes \FH^{\odot q}=L^2_S(T^p,\mathcal{B}^{\otimes p},\mu^{\otimes p})\otimes L^2_S(T^q,\mathcal{B}^{\otimes q},\mu^{\otimes q})$ the space of square integrable and symmetric functions on $T^{p+q}$ in the above sense. Notice that (\ref{ito-sense}) is different to the ordinary symmetrization of $f$ in the theory of real multiple integrals which is given by
\begin{equation}\label{ordinary-sense}
   \hat{f}(t_1,\dots,t_{p+q})=\frac{1}{(p+q)!}\sum_{\pi}f(t_{\pi(1)},\dots,t_{\pi(p+q)}),
\end{equation}
where $\pi$ runs over all permutations of $(1,\dots,p+q)$.
\end{nott}
Obviously, we have that (see (5.2) of \cite{ito})
\begin{equation}\label{daxiao}
  \norm{ \tilde{f}}\le \norm{f}.
\end{equation}
 \begin{defn}{\bf(Complex multiple Wiener-It\^{o} integrals \cite{ito})}\label{dff}
 Suppose that $E_1,\dots,E_n\subset \mathcal{B} $ is any system of disjoint sets and $e_{i_1\dots i_p j_1\dots j_q}$ is a complex-valued function defined for $i_1,\dots, i_p, j_1,\dots, j_q=1,2,\dots,n$ such that $e_{i_1\dots i_p j_1\dots j_q}=0$ unless $i_1,\dots, i_p, j_1,\dots, j_q$ are all different.
 Let $\mathcal{S}_{pq}$ denote the set of all functions of the form
\begin{equation}\label{sfc}
   f(t_1,\dots,t_p,s_1,\dots,s_q)=\sum e_{i_1\dots i_p j_1\dots j_q} \mathbf{1}_{E_{i_1}\times\dots\times E_{i_p}\times E_{j_1}\times\dots\times E_{j_q} },
\end{equation}
where $\mathbf{1}_{B}(\cdot) $ is the characteristic function of the set $B$. The multiple integral of $f $ is defined by
\begin{equation}
   I_{p,q}(f)=\sum e_{i_1\dots i_p j_1\dots j_q} M(E_{i_1})\dots M(E_{i_p})\overline{M(E_{j_1})}\dots \overline{ M(E_{j_q})}.
\end{equation}
Clearly, the above integral satisfies that
\begin{align}
   I_{p,q}(f)&=I_{p,q}(\tilde{f}),\label{ipq}\\
   E[I_{p,q}(f) \overline{I_{p,q}(g)}]&=p!q!\innp{\tilde{f},\tilde{g}},\\
   E[\abs{I_{p,q}(f) }^2]&=p!q!\norm{\tilde{f}}^2\le p!q!\norm{f}^2,\text{\quad (It\^{o}'s isometry)}\label{symmetric2}
\end{align} where $\norm{\cdot}$ and $\innp{\cdot,\cdot}$ are norm and inner product on $\FH^{\otimes(p+ q)} $.
Since $\mathcal{S}_{pq}$ is dense in $\FH^{\otimes(p+ q)} $, one can extend the integral to any $f\in\FH^{\otimes(p+ q)} $ by taking the limit, i.e.,
\begin{equation}
   I_{p,q}(f):=\int\cdots\int f \dif M(t_{1})\dots \dif M(t_{p})\overline{\dif M(s_{1})}\dots \overline{ \dif M(s_{q})}=\lim_{n} I_{p,q}(f_n),
\end{equation}
where $f_n\in \mathcal{S}_{pq}$ such that $f_n\to f$ in $\FH^{\otimes(p+ q)} $, and the definition is independent of the choice of the sequence $\set{f_n}$. In addition, (\ref{ipq}-\ref{symmetric2}) are still valid to any $f,g\in \FH^{\otimes(p+ q)}$.
Moreover, the set
\begin{equation*}
  \mathscr{H}_{p,q}:=\set{I_{p,q}(f):\,f\in \FH^{\otimes(p+ q)}}
\end{equation*}
is called the Wiener-It\^{o} chaos of degree of $(p,q)$ or $(p,q)$-th Wiener-It\^{o} chaos.
 \end{defn}

\begin{defn}{\bf (Complex Hermite polynomials)}
The complex Hermite polynomials $ J_{m,n}(z,\rho)$ are given by \cite{ito}
\begin{equation}\label{itldefn}
    \exp\set{\lambda \bar{z} + \bar{\lambda}z-\rho |\lambda|^2}=\sum_{m=0}^{\infty}\sum_{n=0}^{\infty}\frac{\bar{\lambda}^m\lambda^n}{m!n!}J_{m,n}(z,\rho),
\end{equation}  where $\lambda\in \Cnum$. When $\rho=2$, we will often write $J_{m,n}(z)$ rather than $J_{m,n}(z,\rho)$.
\end{defn}
Applying \cite[Theorem~9]{ito} (or see Lemma~\ref{lmito} below) and the properties of complex Hermite polynomials (see \cite[Theorem 12]{ito}), It\^{o} established the relation between complex multiple integrals and complex Hermite polynomials \cite[Theorem13.2]{ito}: suppose that $ {h}_1(t),\dots,{h}_l(t)$ be any orthonormal system in $\FH$ and $\alpha_i,\,\beta_j=1,\dots,l$, then
\begin{align}
  &\int\cdots\int {h}_{\alpha_1}(t_1)\cdots{h}_{\alpha_m}(t_m)\overline{{h}_{\beta_1}(s_1)}\dots\overline{{h}_{\beta_n}(s_n)} \dif M(t_{1})\dots \dif M(t_{m})\overline{\dif M(s_{1})}\dots \overline{ \dif M(s_{n})}\nonumber \\
  &=\prod_{k=1}^l \, {2^{-\frac{m_k+n_k}{2}}}J_{m_k,n_k}(\sqrt2 Z(h_k)),\label{itomulitp}
\end{align}
where
\begin{equation}\label{zh}
  Z(h_k)=\int {h}_k(t) \dif {M}(t),\,k=1,\dots,l,
\end{equation}
and $m_k,\,n_k$ are the number of $k$ appeared in $\alpha_i$ and $\beta_j$ respectively.
\begin{rem}\label{rm2}
 As a result of the above equality and Proposition~2.9 of \cite{cl2}, $\mathscr{H}_{p,q} $ is equal to the the closed linear subspace of $L_{\Cnum}^2(\mathbf{M})$ generated by the random variables of the type
\begin{equation}\label{chaos}
\set{J_{m,n}(Z({h})), {h}\in \mathfrak{H},\norm{{h}}_{\mathfrak{H}}=\sqrt2},
\end{equation}where $Z(h)$ is the same as (\ref{zh}).
Please refer to Definition~2.7 and Remark~9 of \cite{cl2} for details.
\end{rem}
\begin{nott}\label{com conjut}
   Suppose $  f(t_1,\dots,t_p,s_1,\dots,s_q)\in \FH^{\odot p}\otimes \FH^{\odot q}$. We call $$ \FH^{\odot q}\otimes \FH^{\odot p}\ni h(t_1,\dots,t_q,s_1,\dots,s_p):=\bar{f}(s_1,\dots,s_p,t_1,\dots,t_q)$$ {\bf the reversed complex conjugate} of function $  f(t_1,\dots,t_p,s_1,\dots,s_q)$.
\end{nott}
From Definition~\ref{dff}, we can obtain the following lemma easily.
\begin{lem}\label{lm1}
   Suppose $  f(t_1,\dots,t_p,s_1,\dots,s_q)\in \FH^{\odot p}\otimes \FH^{\odot q}$. Let $h$ be the reversed complex conjugate of $f$, then
\begin{equation}\label{oli}
   \overline{{I}_{p,q}(f)}={I}_{q,p}(h).
\end{equation}
\end{lem}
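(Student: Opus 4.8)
The plan is to establish the identity first on the dense class of elementary functions $\mathcal{S}_{pq}$ and then to pass to the limit by continuity of the integral. First I would take a generic
$$f=\sum e_{i_1\dots i_p j_1\dots j_q}\,\mathbf{1}_{E_{i_1}\times\dots\times E_{i_p}\times E_{j_1}\times\dots\times E_{j_q}}\in\mathcal{S}_{pq}$$
built on a disjoint system $E_1,\dots,E_n$, and compute $\overline{I_{p,q}(f)}$ directly from the definition. Since the factors $M(E_i)$ and $\overline{M(E_j)}$ are scalar-valued and therefore commute, conjugation turns each $M(E_{i_k})$ into $\overline{M(E_{i_k})}$, each $\overline{M(E_{j_\ell})}$ into $M(E_{j_\ell})$, and conjugates the coefficient, so that
$$\overline{I_{p,q}(f)}=\sum \overline{e_{i_1\dots i_p j_1\dots j_q}}\; M(E_{j_1})\dots M(E_{j_q})\,\overline{M(E_{i_1})}\dots\overline{M(E_{i_p})}.$$

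Next I would identify this with $I_{q,p}(h)$. Writing out the reversed complex conjugate $h(t_1,\dots,t_q,s_1,\dots,s_p)=\bar f(s_1,\dots,s_p,t_1,\dots,t_q)$ for the elementary $f$ above, the block $E_{j_1}\times\dots\times E_{j_q}$ now occupies the first $q$ (non-conjugated) slots and $E_{i_1}\times\dots\times E_{i_p}$ the last $p$ (conjugated) slots, so $h$ lies in the analogous class $\mathcal{S}_{qp}$ with relabelled coefficients $e'_{j_1\dots j_q i_1\dots i_p}=\overline{e_{i_1\dots i_p j_1\dots j_q}}$. Applying the definition of $I_{q,p}$ to $h$ then reproduces exactly the sum displayed above, which gives $\overline{I_{p,q}(f)}=I_{q,p}(h)$ on $\mathcal{S}_{pq}$. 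I would also note that the symmetry of $f$ in its first $p$ and its last $q$ variables transfers under the reversal to symmetry of $h$ in its last $p$ and its first $q$ variables, so that $h\in\FH^{\odot q}\otimes\FH^{\odot p}$, as claimed in the statement.

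Finally I would remove the elementary restriction by density. The map $f\mapsto h$ merely permutes the tensor slots and conjugates, both $L^2$-isometries, so it is an isometry of $\FH^{\otimes(p+q)}$ onto $\FH^{\otimes(q+p)}$. Choosing $f_n\in\mathcal{S}_{pq}$ with $f_n\to f$ and letting $h_n$ be the reversed complex conjugate of $f_n$, It\^{o}'s isometry (\ref{symmetric2}) gives $I_{p,q}(f_n)\to I_{p,q}(f)$ and $I_{q,p}(h_n)\to I_{q,p}(h)$ in $L^2$, while $h_n\to h$ by the isometry just noted; passing to the limit in $\overline{I_{p,q}(f_n)}=I_{q,p}(h_n)$ yields (\ref{oli}) in general. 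I do not anticipate a genuine obstacle here: the only delicate point is the index bookkeeping in the elementary step, namely verifying that after reversal and conjugation the non-conjugated and conjugated blocks are paired with the correct coefficient, which is a matter of careful notation rather than real difficulty.
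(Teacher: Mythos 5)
Your proof is correct and is essentially the argument the paper has in mind: the paper offers no written proof, merely asserting that the lemma follows easily from the definition, and your verification on elementary functions of $\mathcal{S}_{pq}$ followed by the density/isometry passage to general $f$ is the standard way to make that assertion precise. No gaps.
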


We conclude these preliminaries by two propositions, that will be needed throughout the sequel.
\begin{prop}\label{pp201}
    \begin{itemize}
        \item[\textup{(1)}] Complex multiple Wiener-It\^{o} integrals have all moments satisfying the following hypercontractivity inequality
        \begin{equation}\label{hypercontract}
         [E\abs{I_{p,q}(f)}^{r}]^{\frac{1}{r}}\le (r-1)^{\frac{p+q}{2}} [E\abs{I_{p,q}(f)}^{2}]^{\frac{1}{2}},\quad r\ge 2,
        \end{equation}
         where $\abs{\cdot}$ is the absolute value (or modulus) of a complex number.
        \item[\textup{(2)}] If a sequence of distributions of $\set{I_{p,q}(f_n)}_{n\ge 1}$ is tight, then
        \begin{equation}\label{tight compact}
          \sup_{n} E\abs{I_{p,q}(f)}^{r}<\infty \quad\text {for every $r>0$.}
        \end{equation}
    \end{itemize}
\end{prop}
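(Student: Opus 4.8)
The plan is to derive both parts from a single structural fact: the complex integral $I_{p,q}(f)$ is a complex eigenvector of the Ornstein--Uhlenbeck semigroup of the underlying Gaussian field, with eigenvalue $-(p+q)$. To set this up I would first realify the complex Gaussian measure, writing $M(B)=\frac{1}{\sqrt 2}\bigl(W_1(B)+\mi W_2(B)\bigr)$, where $W_1,W_2$ are independent real Gaussian measures over $(A,\mathcal A)$, each with control $\mu$. A one-line covariance computation recovers $E[M(B)\overline{M(C)}]=\mu(B\cap C)$ and $E[M(B)M(C)]=0$, so this decomposition is faithful to the structure assumed in Section~\ref{plr}.

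For an elementary $f\in\mathcal S_{pq}$ as in \eqref{sfc}, $I_{p,q}(f)$ is a finite linear combination of products $M(E_{i_1})\cdots M(E_{i_p})\,\overline{M(E_{j_1})}\cdots\overline{M(E_{j_q})}$ in which all $p+q$ sets are disjoint. Substituting the realification turns each such product into a product of $p+q$ independent first-chaos variables of the real field $\{W_1,W_2\}$, i.e.\ a multivariate Hermite polynomial of total degree exactly $p+q$. Hence the real and imaginary parts of $I_{p,q}(f)$ both lie in the $(p+q)$-th real Wiener chaos; by the It\^o isometry \eqref{symmetric2} and the density of $\mathcal S_{pq}$ this persists for every $f\in\FH^{\otimes(p+q)}$. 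Consequently $P_tI_{p,q}(f)=e^{-(p+q)t}I_{p,q}(f)$, where $P_t$ denotes the OU semigroup acting coordinatewise (on real and imaginary parts) on $L^2$ of the real field.

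With this in hand, part (1) follows from Nelson's hypercontractivity theorem. Nelson's estimate $\norm{P_tg}_{L^r}\le\norm{g}_{L^2}$ whenever $e^{2t}\ge r-1$ holds for complex-valued $g$ as well, because the Mehler kernel is nonnegative and yields $\abs{P_tg}\le P_t\abs{g}$ pointwise, reducing the complex case to the classical real statement applied to $\abs{g}$. Taking $g=I_{p,q}(f)$ and $e^{2t}=r-1$, the eigenvector relation gives $\norm{I_{p,q}(f)}_{L^r}=e^{(p+q)t}\norm{P_tI_{p,q}(f)}_{L^r}\le(r-1)^{(p+q)/2}\norm{I_{p,q}(f)}_{L^2}$, which is \eqref{hypercontract}. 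The only delicate point is the justification that the complex integral really sits in one fixed real chaos; once that is secured the inequality is immediate, and I expect this to be the main obstacle of the proposition.

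For part (2), I would first note that by \eqref{hypercontract} with $r=4$ the fourth moments are controlled by the variances, $E\abs{I_{p,q}(f_n)}^4\le 9^{p+q}\bigl(E\abs{I_{p,q}(f_n)}^2\bigr)^2$. Writing $\sigma_n^2=E\abs{I_{p,q}(f_n)}^2$, the Paley--Zygmund inequality applied to $\abs{I_{p,q}(f_n)}^2$ then gives a uniform lower tail bound $P\bigl(\abs{I_{p,q}(f_n)}>\tfrac12\sigma_n\bigr)\ge c(p,q)>0$, with $c(p,q)$ independent of $n$. If $\sup_n\sigma_n=\infty$, then along a subsequence $\sigma_n\to\infty$, whence $P(\abs{I_{p,q}(f_n)}>K)\ge c(p,q)$ for every fixed $K$ and all large $n$, contradicting tightness. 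Therefore $\sup_n\sigma_n<\infty$, and feeding this back into \eqref{hypercontract} bounds every moment of order $r\ge 2$ uniformly; the range $0<r<2$ then follows from $E\abs{I_{p,q}(f_n)}^r\le(\sigma_n^2)^{r/2}$ by Jensen. The essential device here is the Paley--Zygmund step, powered by the fourth-moment estimate from part (1).
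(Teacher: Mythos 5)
Your proof is correct, but for part (1) it takes a genuinely different route from the paper. The paper simply invokes the hypercontractivity of the \emph{nonsymmetric} (complex) Ornstein--Uhlenbeck semigroup from \cite{cy}, for which $I_{p,q}(f)$ is an eigenfunction; you instead realify the measure as $M=\tfrac{1}{\sqrt2}(W_1+\mi W_2)$, show that the real and imaginary parts of $I_{p,q}(f)$ sit in the $(p+q)$-th \emph{real} Wiener chaos of the field $\{W_1,W_2\}$ (via disjointness of the $E_i$'s for elementary kernels, then $L^2$-closedness of chaoses), and reduce to Nelson's classical theorem through the pointwise bound $\abs{P_tg}\le P_t\abs{g}$. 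Your argument is more self-contained and elementary, and it is in fact consistent with the realification the paper itself uses later (the proof of Theorem~\ref{thm3} cites \cite{cl2} for exactly the fact that $\RE\, I_{a,b}(f)$ and $\IM\, I_{a,b}(f)$ are real multiple integrals of order $a+b$); what the paper's citation of \cite{cy} buys is a statement intrinsic to the complex setting without passing through a real chaos decomposition. For part (2) your Paley--Zygmund argument powered by the $r=4$ case of \eqref{hypercontract} is essentially the same as the argument the paper delegates to \cite[Lemma 2.1(ii)]{nr}, and the handling of $0<r<2$ by Jensen is fine. I see no gaps.
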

\begin{proof}
(i) (\ref{hypercontract}) is a consequence of the hypercontractivity of normal Ornstein-Uhlenbeck semigroup \cite{cy}.  \\
(ii) Along the same line as (ii) of \cite[Lemma 2.1]{nr} for the case of real multiple integrals, we can show that (\ref{tight compact}) holds.
\end{proof}

Set $\mathbf{M}=\frac{1}{\sqrt2}[\mathbf{M}_1+\mi \mathbf{M}_2]$, $\mathbf{M}_1,\, \mathbf{M}_2$ are two real independent continuous normal system. Let $\widehat{T}=\set{1,2}\times T,\,\mathcal{B}(\widehat{T})=\mathcal{B}(\set{1,2}\times {T})$,
\begin{equation*}
  \widehat{{M}}(B)={M}_1(B_1)+ {M}_2(B_2),\quad \forall B=\big(\set{1}\times B_1 \big) \bigcup \big(\set{2}\times B_2 \big)\in \mathcal{B}(\widehat{T}).
\end{equation*}
Then $ L^2(\widehat{T}) =L^2(T)\oplus L^2(T)$ and $$\widehat{\mathbf{M}}=\set{\widehat{{M}}(B):\,B=\big(\set{1}\times B_1 \big) \bigcup \big(\set{2}\times B_2 \big),\, \mu(B_1)+\mu(B_2)<\infty} $$ is a real normal random measure on $(\hat{T},\,\mathcal{B}(\widehat{T}))$. Denote by $\mathrm{I}_{n}(f)$ the real $n$-th multiple Wiener-
It\^{o} integral of $f$ with respect to $ \widehat{\mathbf{M}}$ (see subsection~3.2 of \cite{cl2}).
\begin{prop}\label{prop25}
 Suppose that $h\in \FH^{\otimes p}\otimes \FH^{\otimes q}$. Then there exist $f,g\in (L^2(\widehat{T}))^{\otimes (p+q)}$ such that
\begin{equation}\label{cll}
  I_{p,q}(h)=\mathrm{I}_{p+q}(f)+\mi\, \mathrm{I}_{p+q}(g).
\end{equation}
That is, both of the real part and the imaginary part of a complex multiple integral can be represented by real multiple
integrals \cite[Theorem 3.3]{cl2}.
\end{prop}
\section{The product formula for complex multiple Wiener-It\^{o} integrals}

\begin{nott}\label{contra}
For two symmetric functions $f\in \FH^{\odot p_1}\otimes \FH^{\odot q_1},\,g\in \FH^{\odot p_2}\otimes \FH^{\odot q_2}$ and $i\le p_1\wedge q_2,\,j\le q_1\wedge p_2$, the contraction of $(i,j)$ indices of the two functions is given by
   \begin{align}
    & f\otimes_{i,j} g (t_1,\dots,t_{p_1+p_2-i-j
    };s_1,\dots,s_{q_1+q_2-i-j})\label{comprt}\\
    &=\int_{A^{i+j}}\mu^{i+j}(\dif u_1\cdots \dif u_i\dif v_1\cdots \dif v_j)\,f(t_1,\dots,t_{p_1-i},u_1,\dots,u_i; s_1\dots,s_{q_1-j},v_1\dots,v_j)\nonumber\\
    & \times g(t_{p_1-i+1},\dots,t_{p_1-i+p_2-j},v_1\dots,v_j; s_{q_1-j+1}\dots,s_{q_1-j+q_2-i},u_1\dots,u_i);\nonumber
   \end{align}
   by convention, $f\otimes_{0,0} g=f\otimes g$ denotes the tensor product of $f$ and $g$.
We write $f\tilde{\otimes}_{p,q} g$ for the symmetrization of $f\otimes_{p,q} g$. In what follows, we use the convention $f\otimes_{i,j} g=0$ if $i> p_1\wedge q_2$ or $j> q_1\wedge p_2$.
\end{nott}
The following lemma is the starting point of the relationship (\ref{itomulitp}) and the product formula for complex multiple Wiener-It\^{o} integrals.
\begin{lem}\label{lmito}\cite[Theorem~9]{ito}
   Let $f\in \FH^{\odot p}\otimes \FH^{\odot q} $ be a symmetric function and let $g\in \FH$. Then
   \begin{align}
      I_{p,q}(f)I_{1,0}(g)&=I_{p+1,q}(f\otimes g)+qI_{p,q-1}(f\otimes_{0,1} g),\label{ito001}\\
      I_{p,q}(f)I_{0,1}(g)&=I_{p,q+1}(f\otimes g)+pI_{p-1,q}(f\otimes_{1,0} g).\label{ito002}
   \end{align}
\end{lem}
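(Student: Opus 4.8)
The plan is to prove the first identity (\ref{ito001}) directly and then obtain (\ref{ito002}) from it by complex conjugation, so that essentially only one genuine computation is required. Throughout I use, besides the stated covariance $E[M(B)\overline{M(C)}]=\mu(B\cap C)$, the properness of the complex Gaussian measure, namely $E[M(B)M(C)]=0$. This properness is the structural fact that forces the contraction in (\ref{ito001}) to act only on the $q$ conjugate arguments and produces no $\otimes_{1,0}$-type term.

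First I would reduce to elementary kernels carried by a fine partition. By bilinearity, the density of $\mathcal{S}_{pq}$ in $\FH^{\otimes(p+q)}$, and the density of simple functions in $\FH$, it suffices to prove the identity when $f$ and $g$ are approximated by simple functions carried by a common finite family $E_1,\dots,E_n$ of pairwise disjoint sets whose mesh $\max_k\mu(E_k)$ we are free to shrink (such refinements exist because $\mu$ is non-atomic). The limit transition on the left-hand side is justified by the hypercontractivity (\ref{hypercontract}), which upgrades $L^2$-convergence of kernels to $L^4$-convergence of the integrals and hence $L^2$-convergence of the products; on the right-hand side it is justified by It\^o's isometry (\ref{symmetric2}) together with the continuity of $\otimes$ and of the contraction, $\norm{f\otimes_{0,1}g}\le\norm{f}\,\norm{g}$ (Cauchy--Schwarz).

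For the elementary kernels I would expand $I_{p,q}(f)=\sum e_{i_1\dots i_p j_1\dots j_q}M(E_{i_1})\cdots M(E_{i_p})\overline{M(E_{j_1})}\cdots\overline{M(E_{j_q})}$ and $I_{1,0}(g)=\sum_k g_k M(E_k)$, multiply, and split the resulting double sum according to the position of the new index $k$. When $k$ differs from all of $i_1,\dots,i_p,j_1,\dots,j_q$, the monomials are of pure degree $(p+1,q)$ and, after symmetrization, reassemble into $I_{p+1,q}(f\otimes g)$. When $k=i_l$ meets one of the holomorphic indices, the monomial carries a factor $M(E_{i_l})^2$; since $E[M(E_{i_l})^2]=0$ by properness, these terms are already centered, and a direct $L^2$ estimate (using $E\abs{M(E_{i_l})^2}^2=2\mu(E_{i_l})^2$) shows their total contribution is $O(\max_k\mu(E_k))$, hence vanishes as the mesh shrinks. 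When $k=j_m$ meets one of the $q$ conjugate indices, the monomial carries $M(E_{j_m})\overline{M(E_{j_m})}=\mu(E_{j_m})+\bigl(\abs{M(E_{j_m})}^2-\mu(E_{j_m})\bigr)$; the centered remainder again disappears in the refinement limit, while the deterministic part $\mu(E_{j_m})$ performs exactly one integration of $g$ against a conjugate variable of $f$. As $f$ is symmetric in its $q$ conjugate arguments, the $q$ choices of $m$ give identical contributions, producing the factor $q$ and the kernel $f\otimes_{0,1}g$ of (\ref{comprt}). Collecting the three contributions and passing to the fine-partition limit yields (\ref{ito001}).

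Finally I would deduce (\ref{ito002}) without repeating the argument. Taking complex conjugates of (\ref{ito001}), using $\overline{I_{p,q}(f)}=I_{q,p}(h)$ from Lemma~\ref{lm1} together with $\overline{I_{1,0}(g)}=I_{0,1}(\bar g)$, and checking the routine compatibilities that the reversed complex conjugate of $f\otimes g$ is the tensor product of the conjugated kernels and that the reversed conjugate of $f\otimes_{0,1}g$ is their $\otimes_{1,0}$-contraction, turns (\ref{ito001}) written for the pair $(q,p)$ into (\ref{ito002}) for $(p,q)$. The main obstacle is the middle step: the bookkeeping that cleanly separates the three families of monomials, the verification that both families of diagonal terms are genuinely negligible as the mesh tends to zero, and in particular the decisive use of $E[M(E)^2]=0$, which is precisely what suppresses any $\otimes_{1,0}$-type contraction in (\ref{ito001}). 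Everything else is standard density and continuity.
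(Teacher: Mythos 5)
The paper does not actually prove this lemma: it is quoted verbatim from It\^{o} \cite[Theorem~9]{ito}, so there is no in-paper argument to compare against. Your proof is, in substance, It\^{o}'s original one for both the real and complex cases: approximate by off-diagonal elementary kernels on a refining partition, split the product according to whether the new index $k$ avoids all old indices, hits a holomorphic index $i_l$, or hits a conjugate index $j_m$, and show that only the deterministic part $\mu(E_{j_m})$ of $M(E_{j_m})\overline{M(E_{j_m})}$ survives the refinement limit, yielding the factor $q$ and the contraction $f\otimes_{0,1}g$. You also correctly isolate the structural point the paper leaves implicit: the preliminaries only state $E[M(B)\overline{M(C)}]=\mu(B\cap C)$, but the properness $E[M(B)M(C)]=0$ (built into It\^{o}'s definition $M=\tfrac{1}{\sqrt2}(M_1+\mathrm{i}M_2)$) is exactly what kills the $k=i_l$ terms and hence any $\otimes_{1,0}$-type contraction in (\ref{ito001}); and the deduction of (\ref{ito002}) via Lemma~\ref{lm1} and the compatibility of reversed conjugation with $\otimes$ and $\otimes_{0,1}$ is routine and correct. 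The one place where your sketch is thinner than it should be is the $L^{2}$ estimate of the diagonal families: monomials containing a repeated factor $M(E_k)^2$ or $\abs{M(E_k)}^2-\mu(E_k)$ coming from different multi-indices are \emph{not} pairwise orthogonal, so the claimed $O(\max_k\mu(E_k))$ bound cannot be read off from a naive Pythagorean sum; you need to group monomials by their exponent pattern per cell (terms with different patterns are orthogonal because $E[M(E)^a\overline{M(E)}^b]=0$ unless $a=b$) and control each group by Cauchy--Schwarz, using $E\abs{M(E)}^4=2\mu(E)^2$. This is standard bookkeeping, not a flaw in the approach, but it is the step that actually carries the proof and should be written out.
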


\begin{thm}{\bf (Product formula)}\label{pdt fml}
For two symmetric functions $f\in \FH^{\odot a}\otimes \FH^{\odot b},\,g\in \FH^{\odot c}\otimes \FH^{\odot d}$,
the product formula for complex multiple Wiener-It\^{o} integrals is given by
   \begin{align}\label{ii}
     I_{a,b}(f)I_{c,d}(g)&=\sum_{i=0}^{a\wedge d}\sum^{b\wedge c}_{j=0}\,{a\choose i}{d\choose i}{b\choose j}{c\choose j}i!j!\,I_{a+c-i-j,b+d-i-j}(f\otimes_{i,j}g).
   \end{align}
   where $a,b,c,d\in \Nnum$.
\end{thm}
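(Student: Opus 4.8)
The plan is to prove \eqref{ii} by induction on the total order $c+d$ of the second factor, using Lemma~\ref{lmito} both as the base case and as the engine of the inductive step. The product formula \eqref{hh} for the complex Hermite polynomials is the template that tells us what to aim for: the index $i$ records how many of the $a$ \emph{t}-slots of $f$ are contracted against the $d$ \emph{s}-slots of $g$, and $j$ records how many of the $b$ \emph{s}-slots of $f$ are contracted against the $c$ \emph{t}-slots of $g$, which is exactly the operation $\otimes_{i,j}$ of Notation~\ref{contra}. Conceptually the coefficients are forced to equal those in \eqref{hh} because, for a suitably normalized $e\in\FH$ and with the convention $\rho=2$ of \eqref{itldefn}, the diagonal integrals satisfy $I_{m,n}(e^{\otimes m}\otimes\bar e^{\otimes n})=J_{m,n}\!\big(I_{1,0}(e)\big)$; since the Gaussians attached to orthogonal functions are independent, the general formula would follow from \eqref{hh} mode by mode. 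I will nonetheless give the self-contained induction, as Lemma~\ref{lmito} is already at hand.

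First I would reduce to a convenient form for $g$. By \eqref{ipq} both sides of \eqref{ii} see $f$ and $g$ only through their symmetrizations, and both sides are bilinear in $(f,g)$ and continuous: It\^o's isometry \eqref{symmetric2}, the continuity of the contraction maps of Notation~\ref{contra}, and the hypercontractivity bound \eqref{hypercontract} (which places every factor in all $L^p$, so that products converge in $L^2$) let me pass to limits. Hence it suffices to treat $g$ equal to the symmetrization of a kernel $e\otimes g'$ with $e\in\FH$ in one \emph{t}-slot and $g'\in\FH^{\odot(c-1)}\otimes\FH^{\odot d}$, assuming $c\ge 1$ (if $c=0$ one peels an \emph{s}-slot and uses \eqref{ito002} symmetrically). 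For such $g$ one has $I_{c,d}(g)=I_{c,d}(g'\otimes e)$, and rearranging \eqref{ito001} gives the recursion
\[
I_{c,d}(g)=I_{c-1,d}(g')\,I_{1,0}(e)-d\,I_{c-1,d-1}(g'\otimes_{0,1}e).
\]

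The inductive step then multiplies this identity on the left by $I_{a,b}(f)$, giving
\[
I_{a,b}(f)I_{c,d}(g)=\big[I_{a,b}(f)I_{c-1,d}(g')\big]I_{1,0}(e)-d\,I_{a,b}(f)I_{c-1,d-1}(g'\otimes_{0,1}e).
\]
Each product on the right has a second factor of total order strictly below $c+d$, so the induction hypothesis expands both into sums of multiple integrals of contractions; the bracketed group is then multiplied once more by $I_{1,0}(e)$ via \eqref{ito001}, which splits every term into one where $e$ is appended to a new \emph{t}-slot and one where $e$ is contracted against an \emph{s}-slot of the kernel. After collecting the three resulting families and re-symmetrizing, only integrals of the form $I_{a+c-i-j,\,b+d-i-j}(f\otimes_{i,j}g)$ survive, and the theorem reduces to showing their coefficients sum to $\binom{a}{i}\binom{d}{i}\binom{b}{j}\binom{c}{j}i!\,j!$.

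The main obstacle is precisely this coefficient bookkeeping together with the contraction algebra underneath it. One must verify the kernel identities that express $(f\otimes_{i',j'}g')\otimes e$, $(f\otimes_{i',j'}g')\otimes_{0,1}e$ and $f\otimes_{i',j'}(g'\otimes_{0,1}e)$ in terms of $f\otimes_{i,j}(e\otimes g')$, the two possibilities corresponding to whether the distinguished slot $e$ of $g$ survives or is contracted against an \emph{s}-variable of $f$. This dichotomy is exactly what converts the coefficients at orders $(c-1,d)$ and $(c-1,d-1)$ into those at order $(c,d)$ through the Pascal relation $\binom{c}{j}=\binom{c-1}{j}+\binom{c-1}{j-1}$, with the stray factors $d$ and $i!\,j!$ absorbed along the way. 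Checking that every contraction lands with the correct multiplicity, and that no extraneous terms survive the symmetrization, is the only genuinely delicate computation; the limiting and bilinearity reductions above are routine.
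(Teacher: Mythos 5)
Your proposal follows essentially the same route as the paper: induction on the second factor using the recursion $I_{c,d}(g)=I_{c-1,d}(g_1)I_{1,0}(g_2)-d\,I_{c-1,d-1}(g_1\otimes_{0,1}g_2)$ from Lemma~\ref{lmito}, expansion of the resulting three families of terms by the induction hypothesis and \eqref{ito001}, and recombination of coefficients via the Pascal relation. The combinatorial kernel identity you defer as ``the only genuinely delicate computation'' is exactly the identity \eqref{sususu} that the paper itself states and dispatches as ``tedious manipulation,'' so your level of detail matches the published argument.
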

\begin{proof}
From Remark~\ref{rm2},
we only need to show (\ref{ii}) hold for $I_{a,b}(f)=J_{a,b}(Z(h_1))$ and $I_{c,d}(g)=J_{c,d}(Z(h_2))$ with $h_1,h_2\in \FH$ such that $\norm{h_1}=\norm{h_2}=\sqrt2$ by a density argument. That is to say, $f=h_1^{\otimes a}\otimes \bar{h}_1^{\otimes b},\,g=h_2^{\otimes c}\otimes \bar{h}_2^{\otimes d}$.
By the decomposition theorem of Hilbert spaces\cite[p71]{riesz}, we may as well assume that $h_1=h_2$ or $\innp{h_1,\,h_2}_{\FH}=0$.

It follows from the generating function of complex Hermite polynomials \cite{cl,ito} that
\begin{align*}
& \sum_{a=0}^{\infty} \sum_{b=0}^{\infty}\frac{\bar{\lambda}^a\lambda^b}{a!b!}J_{a,b}(z)\sum_{c=0}^{\infty}\sum_{d=0}^{\infty}\frac{\bar{\mu}^c\mu^d}{m!n!}J_{c,d}(z)  \\
 &= \exp\set{\lambda \bar{z} + \bar{\lambda}z-2 |\lambda|^2}  \exp\set{\mu \bar{z} + \bar{\mu}z-2 |\lambda|^2}\\
 &=\exp\set{(\lambda+\mu)\bar{z}+\overline{(\lambda+\mu)}z-2 |\lambda+\mu|^2}\exp\set{2(\bar{\lambda}\mu+\lambda\bar{\mu})}\\
 &=\sum_{m=0}^{\infty}\sum_{n=0}^{\infty}\frac{\overline{(\lambda+\mu)^m}(\lambda+\mu)^n}{m!n!}J_{m,n}(z)\sum_{i=0}^{\infty}\sum_{j=0}^{\infty}
 \frac{2^{i+j}(\bar{\lambda}\mu)^i(\lambda\bar{\mu})^j}{i!j!},
\end{align*} where $z,\lambda,\mu \in \Cnum$.
Expanding $(\lambda+\mu)^n$ and comparing coefficients immediately yield:
   \begin{align}\label{hh}
      J_{a,b}(z)J_{c,d}(z)=\sum_{i=0}^{a\wedge d}\sum^{b\wedge c}_{j=0}\,{a\choose i}{d\choose i}{b\choose j}{c\choose j}i!j!2^{i+j}\,J_{a+c-i-j,b+d-i-j}(z).
   \end{align} 
When $h_1=h_2$, it follows from the relationship (\ref{itomulitp}) that (\ref{ii}) is exact (\ref{hh}).

When $\innp{h_1,\,h_2}_{\FH}=0$, we have that
\begin{equation*}
 f\otimes_{i,j}g=  \left\{
      \begin{array}{ll}
     0, & i+ j>0,\\
     (h_1^{\otimes a} \otimes h_2^{\otimes c})\otimes (\bar{h}_1^{\otimes b}\otimes \bar{h}_2^{\otimes d}), & i=j=0. \\
      \end{array}\nonumber \right.
\end{equation*}
Thus, (\ref{ii}) is degenerated to the relationship (\ref{itomulitp}) in this case.
\end{proof}
\begin{rem}
There are two another approaches to show Theorem~\ref{pdt fml}. One is by induction over the indices $c$ and $d$ (see \cite[Proposition 1.1.3]{Nua}) using Lemma~\ref{lmito}, which involves some tedious combinatorial calculations. The other is by Malliavin calculus (see \cite[Theorem 2.7.10]{np}) if we exploit the framework of complex Malliavin operators.
\end{rem}

The following product formula which is a direct corollary of Lemma~\ref{lm1} and Theorem~\ref{pdt fml}, will be used later.
\begin{cor}\label{ii2}
      \begin{align*}
     I_{a,b}(f)\overline{I_{c,d}(g)}&=\sum_{i=0}^{a\wedge c}\sum^{b\wedge d}_{j=0}\,{a\choose i}{c\choose i}{b\choose j}{d\choose j}i!j!\,I_{a+d-i-j,b+c-i-j}(f\otimes_{i,j}h),
   \end{align*}
   where $h$ is the reversed complex conjugate of $g$ (see Notation~\ref{com conjut}), and
   \begin{align}
    & f\otimes_{i,j} h (t_1,\dots,t_{a+d-i-j
    };s_1,\dots,s_{b+c-i-j})\nonumber \\
    &=\int_{A^{i+j}}\mu^{i+j}(\dif u_1\cdots \dif u_i\dif v_1\cdots \dif v_j)\,f(t_1,\dots,t_{a-i},u_1,\dots,u_i; s_1,\dots,s_{b-j},v_1\dots,v_j)\nonumber\\
    & \times \bar{g}( s_{b-j+1},\dots,s_{b-j+c-i},u_1,\dots,u_i ;t_{a-i+1},\dots,t_{a-i+d-j},v_1,\dots,v_j).\label{fh}
   \end{align}
\end{cor}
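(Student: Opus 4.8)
The plan is to reduce the statement to the product formula already established in Theorem~\ref{pdt fml} by first removing the complex conjugate via Lemma~\ref{lm1}. Since $g\in\FH^{\odot c}\otimes\FH^{\odot d}$, its reversed complex conjugate $h$ (Notation~\ref{com conjut}) lies in $\FH^{\odot d}\otimes\FH^{\odot c}$ and satisfies $\overline{I_{c,d}(g)}=I_{d,c}(h)$ by (\ref{oli}). Hence
\[
I_{a,b}(f)\overline{I_{c,d}(g)}=I_{a,b}(f)\,I_{d,c}(h),
\]
so the problem becomes a genuine product of two non-conjugated complex multiple integrals, to which Theorem~\ref{pdt fml} applies verbatim.

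Next I would apply (\ref{ii}) directly to $I_{a,b}(f)I_{d,c}(h)$. Here the first kernel carries the pair $(a,b)$ and the second kernel $h$ carries the pair $(d,c)$, so in the notation of (\ref{ii}) one sets the role of ``$c$'' equal to $d$ and the role of ``$d$'' equal to $c$. Under this relabeling the summation ranges $0\le i\le a\wedge d'$ and $0\le j\le b\wedge c'$ with $(c',d')=(d,c)$ become $0\le i\le a\wedge c$ and $0\le j\le b\wedge d$; the binomial factors ${a\choose i}{d'\choose i}{b\choose j}{c'\choose j}$ become ${a\choose i}{c\choose i}{b\choose j}{d\choose j}$; and the output index $I_{a+c'-i-j,\,b+d'-i-j}$ becomes $I_{a+d-i-j,\,b+c-i-j}$. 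This reproduces exactly the right-hand side of the asserted identity, with contraction $f\otimes_{i,j}h$.

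Finally, I would verify that this contraction admits the explicit form (\ref{fh}). Substituting $p_1=a,\,q_1=b$ for $f$ and $p_2=d,\,q_2=c$ for $h$ into Notation~\ref{contra} (so that the contraction indices satisfy $i\le a\wedge c$ and $j\le b\wedge d$, consistent with the ranges above), the factor involving $h$ appears as
\[
h(t_{a-i+1},\dots,t_{a-i+d-j},v_1,\dots,v_j;\,s_{b-j+1},\dots,s_{b-j+c-i},u_1,\dots,u_i),
\]
whose first block has $d$ entries and whose second block has $c$ entries, as required. Replacing $h$ by its definition as the reversed complex conjugate of $g$---that is, swapping the two blocks and conjugating---turns this factor into precisely the $\bar{g}(\cdots)$ displayed in (\ref{fh}), while the $f$-factor is left unchanged. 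This yields (\ref{fh}) verbatim.

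The whole argument is thus mechanical once Lemma~\ref{lm1} and Theorem~\ref{pdt fml} are in hand; no new analytic input is needed. The only step demanding genuine care---and the place where a bookkeeping slip would most easily occur---is performing the index relabeling $(c,d)\mapsto(d,c)$ inside the product formula simultaneously with the block-swap hidden in the definition of $h$, since one must confirm that these two relabelings are mutually consistent and jointly produce the stated contraction (\ref{fh}) rather than some permuted variant.
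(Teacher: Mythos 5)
Your proposal is correct and follows exactly the route the paper intends: the paper states this corollary as "a direct corollary of Lemma~\ref{lm1} and Theorem~\ref{pdt fml}" without further detail, and your argument---replacing $\overline{I_{c,d}(g)}$ by $I_{d,c}(h)$ via (\ref{oli}), applying (\ref{ii}) with the second index pair $(d,c)$, and unwinding Notation~\ref{contra} to recover (\ref{fh})---is precisely that verification, with the index bookkeeping carried out correctly.
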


\section{The independence of complex multiple Wiener-It\^{o} integrals}
\begin{lem}\label{lm31}
   For two symmetric functions $f\in \FH^{\odot a}\otimes \FH^{\odot b},\,g\in \FH^{\odot c}\otimes \FH^{\odot d}$, let $F=I_{a,b}(f),\,G=I_{c,d}(g)$ and $h$ be the reversed complex conjugate of $g$. Then
\begin{align*}
     & \mathrm{Cov}(|F|^2,\,|G|^2)\nonumber\\
      &=\sum_{i+j>0}{a\choose i}{c\choose i}{b\choose j}{d\choose j} a!b!c!d! \norm{f\otimes_{i,j}g}^2_{\FH^{\otimes(m-2(i+j))}}\nonumber\\
      &+\sum_{r\ge 1}\big((a+d-r)!(b+c-r)!\big)^2 \norm{\phi_r}^2_{\FH^{\otimes(m-2r)}},
\end{align*}
   where 
\begin{align}\label{bbb}
   \phi_r=\sum_{i+j=r}{a\choose i}{c\choose i} {b\choose j} {d\choose j}i!j!  f\tilde{\otimes}_{i,j} h.
\end{align}
As a consequence, the squares of the absolute values of complex multiple Wiener-It\^{o} integrals are non-negatively correlated.
\end{lem}
\begin{proof}
   We divide the proof into three steps. Let $m=a+b+c+d$.

   Firstly, it follows from Corollary~\ref{ii2}, the orthogonal property and It\^{o}'s isometry of multiple Wiener-It\^{o} integrals that
   \begin{align*}
      E[|F\bar{G}|^2]=\sum_{r\ge 0}\big((a+d-r)!(b+c-r)!\big)^2\norm{\phi_r}^2_{\FH^{\otimes(m-2r)}}.
   \end{align*}

   Secondly, we claim that
   \begin{align*}
      (a+d)!(b+c)!\norm{f\tilde{\otimes}h}^2=\sum_{i=0}^{a\wedge d}\sum_{j=0}^{b\wedge c} {a\choose i}{d\choose i}{b\choose j}{c\choose j} a!b!c!d! \norm{f\otimes_{i,j}g}^2_{\FH^{\otimes(m-2(i+j))}}.
   \end{align*}
  Let $\pi$ ($\sigma$ resp.) be a permutation of the set $\set{1,\dots, a+d}$ ($\set{1,\dots,b+c} $ resp.). Denote by $\pi_0,\,\sigma_0$ the identity permutations. We write $\pi\sim_{i} \pi_0$ ($\sigma\sim_{i} \sigma_0$ resp.) if the set $\set{\pi(1),\dots,\pi(a)}\cap \set{1,\dots, a} $
  ($\set{\sigma(1),\dots,\pi(b)}\cap \set{1,\dots, b}$ resp.) contains exactly $i$ elements \cite{NuaPec}.  Then we have that
   \begin{align*}
     &(a+d)!(b+c)!\norm{f\tilde{\otimes}h}^2\\
     &=(a+d)!(b+c)!\innp{f{\otimes}h,\,f\tilde{\otimes}h}\\
     &=\sum_{\pi}\sum_{\sigma}\int_{A^m}\dif \mu^{m}f(t_1,\dots,t_a,s_1\dots,s_b)\bar{g}(s_{b+1},\dots,s_{b+c},t_{a+1},\dots,t_{a+d})\\
     &\times \bar{f}(t_{\pi(1)},\dots,t_{\pi(a)},s_{\sigma(1)}\dots,s_{\sigma(b)}) g(s_{\sigma(b+1)},\dots,s_{\sigma(b+c)},t_{\pi(a+1)},\dots,t_{\pi(a+d)})\\
     &=\sum_{i=0}^{a\wedge d}\sum_{j=0}^{b\wedge c}\sum_{\pi\sim_{a-i} \pi_0}\sum_{\sigma\sim_{b-j} \sigma_0}\norm{f\otimes_{i,j}g}^2_{\FH^{\otimes(m-2(i+j))}}\\
     &=\sum_{i=0}^{a\wedge d}\sum_{j=0}^{b\wedge c} {a\choose i}{d\choose i}{b\choose j}{c\choose j} a!b!c!d! \norm{f\otimes_{i,j}g}^2_{\FH^{\otimes(m-2(i+j))}}.
   \end{align*}

   Thirdly, note that when $i=j=0$, $\norm{f\otimes_{i,j}g}^2_{\FH^{\otimes(m-2(i+j))}}=\norm{f}^2\norm{g}^2$. It\^{o}'s isometry implies that
   $E[|F|^2]E[|G|^2]= a!b!c!d! \norm{f}^2\norm{g}^2$. Thus,
   \begin{align*}
      &\mathrm{Cov}(|F|^2,\,|G|^2)\\
      &=E[|F\bar{G}|^2]-E[|F|^2]E[|G|^2]\\
      &=\sum_{i+j>0}{a\choose i}{c\choose i}{b\choose j}{d\choose j} a!b!c!d! \norm{f\otimes_{i,j}g}^2_{\FH^{\otimes(m-2(i+j))}}\\
      &+\sum_{r\ge 1}\big((a+d-r)!(b+c-r)!\big)^2 \norm{\phi_r}^2_{\FH^{\otimes(m-2r)}}.
   \end{align*}
\end{proof}

\begin{thm}\label{mthm}{\bf (\"{U}st\"{u}nel-Zakai independence criterion) }\label{thm2}
   For two symmetric functions $f\in \FH^{\odot a}\otimes \FH^{\odot b},\,g\in \FH^{\odot c}\otimes \FH^{\odot d}$ with $a+b\ge 1 ,\,c+d\ge 1$, the following conditions are equivalent:
     \begin{itemize}
    \item[\textup{(i)}] $I_{a,b}(f)$ and $I_{c,d}(g)$ are independent random variables;
    \item[\textup{(ii)}] $f\otimes_{1,0}g=0$, $f\otimes_{0,1}g=0$, $f\otimes_{1,0}h=0$ and $f\otimes_{0,1}h=0$ a.e. $\mu^{m-2}$, where $m=a+b+c+d$ and $h$ is the reversed complex conjugate of $g$.
  \end{itemize}
\end{thm}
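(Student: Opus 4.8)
The plan is to prove the equivalence $(i)\Leftrightarrow(ii)$ by exploiting the product‐formula machinery, with the crucial observation that independence of two (complex) random variables in fixed Wiener chaoses is detected entirely by the vanishing of a certain covariance. The natural bridge is Lemma~\ref{lm31}: since $|F|^2$ and $|G|^2$ are non‑negatively correlated, one has $\mathrm{Cov}(|F|^2,|G|^2)=0$ if and only if \emph{every} term in the sum vanishes, i.e. $f\otimes_{i,j}g=0$ and $f\tilde\otimes_{i,j}h=0$ for all $(i,j)$ with $i+j>0$ (here $h$ is the reversed complex conjugate of $g$). Thus the real content is to show that this whole family of contraction identities is in turn equivalent to the four degree‑one conditions in $(ii)$.

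First I would establish the easy direction $(i)\Rightarrow(ii)$. If $F=I_{a,b}(f)$ and $G=I_{c,d}(g)$ are independent, then in particular $|F|^2$ and $|G|^2$ are independent, hence uncorrelated, so $\mathrm{Cov}(|F|^2,|G|^2)=0$. Because every summand in Lemma~\ref{lm31} is non‑negative, each must vanish; taking $(i,j)=(1,0)$ and $(i,j)=(0,1)$ forces $f\otimes_{1,0}g=0$, $f\otimes_{0,1}g=0$ together with $f\tilde\otimes_{1,0}h=0$, $f\tilde\otimes_{0,1}h=0$, and one then removes the symmetrization (arguing that vanishing of the symmetrized contraction forces vanishing of the unsymmetrized one, or directly that the relevant norms coincide) to recover $f\otimes_{1,0}h=0$, $f\otimes_{0,1}h=0$ a.e.\ $\mu^{m-2}$.

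The converse $(ii)\Rightarrow(i)$ is the substantive part. The key combinatorial step is to show that the four first‑order vanishing conditions propagate to all higher contractions: if $f\otimes_{1,0}g=0$ and $f\otimes_{0,1}g=0$, then $f\otimes_{i,j}g=0$ for every $i+j\ge 1$, and similarly $f\otimes_{i,j}h=0$, so that by Lemma~\ref{lm31} the covariance vanishes. This is a standard but delicate Fubini/Cauchy–Schwarz nesting argument: a higher‑order contraction can be written as an integral of a lower‑order contraction against the remaining kernel, so that once the building‑block degree‑one contractions are null, the iterated ones collapse. Having obtained $\mathrm{Cov}(|F|^2,|G|^2)=0$ is not yet full independence, however, so I would promote it to genuine independence by the standard \"{U}st\"{u}nel–Zakai scheme: show that the four conditions in $(ii)$ in fact imply $f\otimes_{i,j}g=0$ and $f\otimes_{i,j}h=0$ for \emph{all} admissible $(i,j)$ with $i+j\ge 1$, and then invoke the product formula (Theorem~\ref{pdt fml}) and Corollary~\ref{ii2} to compute mixed moments $E[F^{p_1}\bar F^{q_1}G^{p_2}\bar G^{q_2}]$ and verify they factor as $E[F^{p_1}\bar F^{q_1}]\,E[G^{p_2}\bar G^{q_2}]$. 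When all nontrivial contractions between the two kernels vanish, each product $I_{a,b}(f)I_{c,d}(g)$ (and with conjugates) reduces to the single top term $I_{a+c,b+d}(f\otimes g)$ with no cross‑contraction surviving, which forces the joint moments to factor and hence the joint characteristic function of $(\RE F,\IM F,\RE G,\IM G)$ to factor, giving independence.

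The main obstacle I anticipate is the combinatorial bookkeeping in the converse: carefully verifying that vanishing of the degree‑one contractions (both with $g$ and with its reversed complex conjugate $h$) forces vanishing of \emph{all} the mixed contractions appearing in every mixed moment, and that no surviving cross term can reintroduce dependence. One must keep precise track of which indices of $f$ are paired with holomorphic versus antiholomorphic slots of $g$ (the role of $h$ in Corollary~\ref{ii2}), since independence in the complex setting requires controlling $E[F\bar G]$‑type and $E[FG]$‑type contractions simultaneously — this is exactly why \emph{both} $g$ and $h$ enter condition $(ii)$, and overlooking either family would give only uncorrelatedness rather than full independence.
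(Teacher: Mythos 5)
Your direction \textup{(ii)}$\Rightarrow$\textup{(i)} contains a fundamental gap at its final step. You propose to show that all mixed contractions vanish, compute every mixed moment $E[F^{p_1}\bar F^{q_1}G^{p_2}\bar G^{q_2}]$ via the product formula, verify that these factor, and then conclude that ``the joint characteristic function \ldots\ factors, giving independence.'' That last inference is invalid: the law of a multiple Wiener--It\^{o} integral of order $a+b\ge 3$ is in general \emph{not} determined by its moments --- hypercontractivity (\ref{hypercontract}) only gives $E\abs{F}^{2r}\le C^{r}(2r-1)^{(a+b)r}$, which grows too fast for Carleman's condition --- so equality of all joint moments with the product of the marginal moments does not yield factorization of the characteristic function, hence not independence. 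This is exactly why the paper does not argue via moments. It instead follows Kallenberg's scheme: condition \textup{(ii)} is shown (by Fubini) to force orthogonality of the Hilbert subspaces $\set{\mathcal{H}_f,\overline{\mathcal{G}_f}}$ and $\set{\mathcal{H}_g,\overline{\mathcal{G}_g}}$ spanned by the one-variable ``slices'' of $f$ and $g$; expanding $f$ and $g$ in orthonormal bases of these mutually orthogonal subspaces and using (\ref{ipq_j}) exhibits $F$ and $G$ as ($L^2$-limits of) polynomials in two independent complex Gaussian families $\set{\xi_i}$ and $\set{\eta_j}$, whence independence. Note also that your preliminary step --- propagating the degree-one vanishing to all higher contractions --- is true, but its natural proof \emph{is} this subspace-orthogonality argument, so the moment route would require the paper's construction anyway and still would not close.

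There is a secondary flaw in \textup{(i)}$\Rightarrow$\textup{(ii)}. Lemma~\ref{lm31} delivers, for the $h$-family, the vanishing of the \emph{symmetrized} contractions $f\tilde{\otimes}_{1,0}h$ and $f\tilde{\otimes}_{0,1}h$; you propose to remove the symmetrization by ``arguing that vanishing of the symmetrized contraction forces vanishing of the unsymmetrized one, or directly that the relevant norms coincide.'' Neither holds in general: by (\ref{daxiao}) one only has $\norm{\tilde{u}}\le\norm{u}$, and the inequality can be strict (an antisymmetric $u\neq 0$ has $\tilde{u}=0$). The paper's fix is to apply Lemma~\ref{lm31} a second time to the pair $(F,\bar G)=(I_{a,b}(f),I_{d,c}(h))$: since $\mathrm{Cov}(\abs{F}^2,\abs{\bar G}^2)=\mathrm{Cov}(\abs{F}^2,\abs{G}^2)=0$ and the roles of $g$ and $h$ are exchanged in the resulting expansion, the \emph{unsymmetrized} contractions $f\otimes_{1,0}h$ and $f\otimes_{0,1}h$ now appear with non-negative coefficients and are forced to vanish. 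You should adopt that device rather than attempt to de-symmetrize directly.
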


\begin{proof}
(i)$\Rightarrow$ (ii): Denote $F=I_{a,b}(f),\,G=I_{c,d}(g)$. It follows from Proposition~\ref{pp201} (1) that inside a fixed Wiener chaos (i.e., for the fixed $(a,b)$), all the $L^q$-norms ($q>1$) are equivalent. Thus $\mathrm{Cov}(\abs{F}^2,\,\abs{G}^2)$ is finite. If (i) is satisfied then $\mathrm{Cov}(\abs{F}^2,\,\abs{G}^2)=0$. It follows from Lemma~\ref{lm31} that $f\otimes_{1,0}g=0$ and $f\otimes_{0,1}g=0$. Note that $\bar{G}=I_{d,c}(h)$ and $F,\,\bar{G}$ are also independent random variables. Thus we also have that $f\otimes_{1,0}h=0$ and $f\otimes_{0,1}h=0$.

(ii)$\Rightarrow$ (i): We divide the proof into three steps along the same line as the proof for real multiple integrals \cite{klb}.

Firstly, let $\mathcal{H}_f,\,\mathcal{G}_f $ respectively denote the Hilbert subspace in $\FH$ spanned by all functions
\begin{align*}
   t\mapsto \int_{A^{a+b-1}}f(t,x_1,\dots,x_{a+b-1})h(x_1,\dots,x_{a+b-1})\mu^{a+b-1}(\dif x_1\dots \dif x_{a+b-1})\\
   t\mapsto \int_{A^{a+b-1}}f(x_1,\dots,x_{a+b-1},t)h(x_1,\dots,x_{a+b-1})\mu^{a+b-1}(\dif x_1\dots \dif x_{a+b-1})
\end{align*}
 where $t\in A$ and $h\in \FH^{\otimes (a+b-1)}$. Similarly we define $\mathcal{H}_g,\,\mathcal{G}_g$ in terms of $g$. Denote by $\overline{\mathcal{G}_f} $ the complex conjugate of $\mathcal{G}_f$. We claim that condition (ii) implies that $\set{\mathcal{H}_f,\,\overline{\mathcal{G}_f}}$ and $\set{\mathcal{H}_g,\,\overline{\mathcal{G}_g}}$ are orthogonal. In fact, $f\otimes_{1,0}g=0$, $f\otimes_{0,1}g=0$, $f\otimes_{1,0}h=0$ and $f\otimes_{0,1}h=0$ respectively imply that $\mathcal{H}_f \perp \overline{\mathcal{G}_g}$, $\overline{\mathcal{G}_f}\perp \mathcal{H}_g$, $\mathcal{H}_f\perp \mathcal{H}_g $ and $\overline{\mathcal{G}_f}\perp \overline{\mathcal{G}_g}$ using Fubini Theorem. For example, let $h(x)\in \FH^{\otimes (a+b-1)},\,l(y)\in \FH^{\otimes (c+d-1)}$ and $m=a+b+c+d$, we have that
 \begin{align*}
    &\int_A\mu(\dif t)\int_{A^{a+b-1}}f(t,x)h(x)\mu^{a+b-1}(\dif x)\int_{A^{c+d-1}}g(y,t)l(y)\mu^{c+d-1}(\dif y)\\
    &=\int_{A^{m-2}}\mu^{m-2}(\dif x\dif y)h(x)l(y)\int_A f(t,x)g(y,t)\mu(\dif t)\\
    &=\int_{A^{m-2}}h(x)l(y)f\otimes_{1,0}g\,\mu^{m-2}(\dif x\dif y)=0
 \end{align*}
 i.e., $\mathcal{H}_f $ and $\overline{\mathcal{G}_g}$ are orthogonal.

Secondly, let $\set{\varphi_n}$ ($\set{\psi_n}$ resp.) be an orthonormal basis for the Hilbert subspace in $\FH$ spanned by $\set{\mathcal{H}_f,\,\overline{\mathcal{G}_f}}$ ( $\set{\mathcal{H}_g,\,\overline{\mathcal{G}_g}}$ resp.). Since the tensor products $\varphi_{i_1}\otimes \cdots\otimes \varphi_{i_a}\otimes \bar{\varphi}_{j_1}\otimes\cdots\otimes \bar{\varphi}_{j_b}$ form an orthonormal basis in $\mathcal{H}^{\otimes a}\otimes \overline{\mathcal{G}_f}^{\otimes b}$, it follows from monotonic class theorem \cite{chung,durrett} that $f$ (and $g$)
can be decomposed as
\begin{align*}
   f&=\sum e_{i_1\dots i_a j_1\dots j_b} \varphi_{i_1}\otimes \cdots\otimes \varphi_{i_a}\otimes \bar{\varphi}_{j_1}\otimes\cdots\otimes \bar{\varphi}_{j_b},\\
   g&=\sum l_{i_1\dots i_c j_1\dots j_d} \psi_{i_1}\otimes \cdots\otimes \psi_{i_c}\otimes \bar{\psi}_{j_1}\otimes\cdots\otimes \bar{\psi}_{j_d}.
\end{align*}

Thirdly, we claim that $F,G$ are independent. In fact, we write $\xi_i=\int_A \varphi_i(a)M(\dif a) $ and $\eta_j=\int_A \psi_j M(\dif a)$ for all $i$ and $j$. Then the Cram\'{e}r-Wold theorem implies that the entire sequences $\set{\xi_i}$ and $\set{\eta_j}$ are independent \cite{klb}. It follows from (\ref{itomulitp}) that $F=I_{a,b}(f)$ (  $G=I_{c,d}(f)$ resp.) can be expanded into polynomials in $\xi_1,\xi_2,\dots$ ( $\eta_1,\eta_2,\dots$ resp.). Thus $F,\,G$ are independent.
\end{proof}
\begin{rem} Similar to real multiple integrals \cite{rs}, condition (i) of Theorem~\ref{mthm} is also equivalent to
   \begin{itemize}
      \item[\textup{(iii)}] $\mathrm{Cov}(\abs{F}^2,\,\abs{G}^2)=0$, i.e., $\abs{F}^2,\,\abs{G}^2$ are uncorrected,
   \end{itemize}
which can be observed from the above proof.
\end{rem}

\section{Appendix: Asymptotic independence of complex multiple Wiener-It\^{o} integrals}
\begin{defn}\cite[Definition 3.3]{nr}
Fix $d\ge 1$ and for each $n\ge 1$, let $F_n=(F_{1,n},\dots,F_{d,n})$ be a $d$-dimensional complex-valued random variable. We say
the variables $(F_{i,n})_{1\le i\le d}$ are asymptotically moment-independent if each $F_{i,n}$ admits moments of all orders and for any two sequences $(l_1,\dots,l_d)$ and $(k_1,\dots,k_d)$ of non-negative integrals,
\begin{equation}\label{asympot indep}
 \lim_{n\to \infty}\set{E[\prod_{i=1}^d F_{i,n}^{l_i}\bar{F}_{i,n}^{k_i}]-\prod_{i=1}^d E[F_{i,n}^{l_i}\bar{F}_{i,n}^{k_i}]}=0.
\end{equation}
\end{defn}
\begin{thm} {\bf (Nourdin-Rosi\'{n}ski asymptotically moment-independence criterion)}\label{thm3}
Fix $d\ge 2$ and let $(a_1,\dots,a_d)$ and $(b_1,\dots,b_d)$ be two sequences of non-negative integrals.
   For each $n\ge 1$, let $F_n=(F_{1,n},\dots,F_{d,n})$ be a $d$-dimensional complex multiple Wiener-It\^{o} integrals, where
   $F_{i,n}=I_{a_i,b_i}(f_{i,n})$ with $f_{i,n}\in \FH^{\odot a_i}\otimes\FH^{\odot b_i}$. If for every $1\le i\le d$,
   \begin{equation}\label{tight}
     \sup_{n}E[\abs{F_{i,n}}^2]<\infty,
   \end{equation}
   then the following conditions are equivalent:
     \begin{itemize}
    \item[\textup{(i)}] the random variables $(F_{i,n})_{1\le i \le d}$ are asymptotically moment-independent;
    \item[\textup{(ii)}] $\lim_{n\to\infty}\mathrm{Cov}(\abs{F_{i,n}}^2,\,\abs{F_{j,n}}^2)=0$ for every $i\neq j$;
    \item[\textup{(iii)}]For  every $i\neq j$, $\lim_{n\to\infty}\norm{f_{i,n}\otimes_{r,s}f_{j,n}}=0$ for every (r,s) such that $r\le a_i\wedge b_j,\,s\le a_j\wedge b_i,\,r+s>0$, and $\lim_{n\to\infty}\norm{f_{i,n}\otimes_{r,s}h_{j,n}}=0$ for every (r,s) such that $r\le a_i\wedge a_j,\,s\le b_i\wedge b_j,\,r+s>0$, where $h_{j,n}$ is the reversed complex conjugate of $f_{j,n}$.
  \end{itemize}
\end{thm}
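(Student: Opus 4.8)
The plan is to prove the chain of equivalences by establishing (ii)$\Leftrightarrow$(iii) directly from the formula in Lemma~\ref{lm31}, and then closing the loop (i)$\Rightarrow$(ii) and (iii)$\Rightarrow$(i), following the structure of the Nourdin--Rosi\'{n}ski argument for real integrals in \cite{nr}. First I would observe that condition (\ref{tight}), together with part (2) of Proposition~\ref{pp201}, guarantees that all moments of the $F_{i,n}$ are uniformly bounded in $n$; this uniform moment control is what makes every limit below legitimate and lets us treat the covariances and mixed moments as finite quantities.

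The equivalence (ii)$\Leftrightarrow$(iii) is essentially a restatement of Lemma~\ref{lm31}. For each fixed pair $i\neq j$, that lemma expresses $\mathrm{Cov}(\abs{F_{i,n}}^2,\abs{F_{j,n}}^2)$ as a finite sum, over $r+s>0$, of nonnegative terms: a multiple of $\norm{f_{i,n}\otimes_{r,s}f_{j,n}}^2$ plus a multiple of $\norm{f_{i,n}\tilde{\otimes}_{r,s}h_{j,n}}^2$, all coefficients being strictly positive. Since every summand is nonnegative, the covariance tends to $0$ if and only if each individual norm tends to $0$. To pass between the symmetrized contraction norm $\norm{f_{i,n}\tilde{\otimes}_{r,s}h_{j,n}}$ appearing in Lemma~\ref{lm31} and the unsymmetrized norm $\norm{f_{i,n}\otimes_{r,s}h_{j,n}}$ stated in (iii), I would invoke the inequality $\norm{\tilde{F}}\le\norm{F}$ from (\ref{daxiao}) in one direction, and in the other direction use the standard fact that convergence of an unsymmetrized contraction norm to zero for \emph{all} index pairs forces the symmetrized norms to zero as well (the symmetrization is an average of permutations of contractions, each of which is itself an unsymmetrized contraction of the same total order). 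This gives (ii)$\Leftrightarrow$(iii).

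For (i)$\Rightarrow$(ii), I would simply specialize the asymptotic moment-independence (\ref{asympot indep}) to the multi-index picking out $l_i=k_i=1$, $l_j=k_j=1$ and all other exponents zero: this says $E[\abs{F_{i,n}}^2\abs{F_{j,n}}^2]-E[\abs{F_{i,n}}^2]E[\abs{F_{j,n}}^2]\to 0$, which is exactly $\mathrm{Cov}(\abs{F_{i,n}}^2,\abs{F_{j,n}}^2)\to 0$. The substantive implication is (iii)$\Rightarrow$(i), and this is where I expect the main work to lie. The strategy is to expand a general mixed moment $E[\prod_i F_{i,n}^{l_i}\bar{F}_{i,n}^{k_i}]$ by repeatedly applying the product formula (Theorem~\ref{pdt fml}) and its conjugate form (Corollary~\ref{ii2}), reducing every product of integrals to a linear combination of single integrals $I_{p,q}(\cdot)$ of iterated contractions of the kernels; taking expectations then kills all terms except the fully-contracted scalars. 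The difference on the left of (\ref{asympot indep}) is a sum of such scalar terms, and one shows that every term in which a kernel $f_{i,n}$ is genuinely contracted against some $f_{j,n}$ or $h_{j,n}$ (with $i\neq j$, $r+s>0$) carries a factor whose norm is bounded, via repeated Cauchy--Schwarz and the uniform moment bounds, by a product that contains at least one cross-contraction norm of the type appearing in (iii); hence such terms vanish in the limit, while the surviving terms reassemble exactly into the factorized product $\prod_i E[F_{i,n}^{l_i}\bar{F}_{i,n}^{k_i}]$.

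The main obstacle is the bookkeeping in (iii)$\Rightarrow$(i): controlling the general mixed moment requires tracking how an arbitrary monomial in the components expands under iterated products, and verifying that the "cross" contractions (those linking distinct indices $i\neq j$) are precisely the ones whose vanishing is governed by (iii), while "self" contractions (within a single index) regroup into the product of individual moments. I would handle this by an induction on the total degree $\sum_i(l_i+k_i)$, peeling off one factor at a time using (\ref{ito001})--(\ref{ito002}) and the product formula, and at each step using the hypercontractive $L^q$-bounds from Proposition~\ref{pp201}(1) to dominate the lower-order integrals so that any appearance of a cross-contraction norm forces the whole term to $0$; the uniform $L^2$-bounds (\ref{tight}) then upgrade to uniform $L^q$-bounds, which is what makes the Cauchy--Schwarz estimates uniform in $n$. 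This mirrors the combinatorial heart of \cite[Theorem~3.4]{nr}, now carried out with the complex contractions $\otimes_{r,s}$ in place of the real ones.
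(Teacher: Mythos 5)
Your route diverges from the paper's precisely at the hard implication. For (iii)$\Rightarrow$(i) you propose to redo the Nourdin--Rosi\'{n}ski moment expansion from scratch in the complex setting, iterating Theorem~\ref{pdt fml} and Corollary~\ref{ii2} on $E[\prod_i F_{i,n}^{l_i}\bar F_{i,n}^{k_i}]$ and killing the cross-contraction terms by Cauchy--Schwarz and hypercontractivity. The paper avoids this entirely: it writes $F_{i,n}=U_{i,n}+\sqrt{-1}V_{i,n}$, invokes Theorem~3.3 of \cite{cl2} to realize $U_{i,n},V_{i,n}$ as \emph{real} multiple Wiener--It\^{o} integrals of order $a_i+b_i$, and then cites Theorem~3.4 and Remark~3.5 of \cite{nr} for the two-dimensional blocks $(U_{i,n},V_{i,n})$; since $\abs{F_{i,n}}^2=U_{i,n}^2+V_{i,n}^2$ is exactly the squared Euclidean norm of the block, the real covariance criterion is literally condition (ii), and all that remains is the easy identification of complex moment-independence with real moment-independence of the blocks. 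Your plan is self-contained and in principle viable, but as written it is a plan rather than a proof: the combinatorial heart (which terms survive the expectation, why the self-contractions reassemble into $\prod_i E[F_{i,n}^{l_i}\bar F_{i,n}^{k_i}]$, and why every non-surviving term is dominated by a cross-contraction norm from (iii)) is asserted, not carried out, and this is exactly the part that occupies the bulk of the argument in \cite{nr}.

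There is also a concrete logical slip in your (ii)$\Rightarrow$(iii). Lemma~\ref{lm31} writes the covariance as a positive combination of $\norm{f_{i,n}\otimes_{r,s}f_{j,n}}^2$ (unsymmetrized) and $\norm{f_{i,n}\tilde{\otimes}_{r,s}h_{j,n}}^2$ (symmetrized). Vanishing of the covariance therefore gives the first family of (iii) directly, but for the second family it only forces the \emph{symmetrized} contraction norms to zero. Both devices you offer --- the inequality $\norm{\tilde{f}}\le\norm{f}$ of (\ref{daxiao}) and ``the symmetrization is an average of unsymmetrized contractions'' --- prove that unsymmetrized norms tending to $0$ force symmetrized norms to $0$; that is the direction needed for (iii)$\Rightarrow$(ii), and neither yields the converse, since $\norm{f\tilde{\otimes}_{r,s}h}\to 0$ does not in general imply $\norm{f\otimes_{r,s}h}\to 0$ (symmetrization can cancel). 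The repair is to apply Lemma~\ref{lm31} a second time to the pair $(F_{i,n},\overline{F_{j,n}})$: by Lemma~\ref{lm1}, $\overline{F_{j,n}}=I_{b_j,a_j}(h_{j,n})$ and $\abs{\overline{F_{j,n}}}^2=\abs{F_{j,n}}^2$, so the same covariance acquires a second representation in which the $h$-contractions appear unsymmetrized, and the two representations together give all of (iii). Your (i)$\Rightarrow$(ii) specialization and the use of (\ref{tight}) with Proposition~\ref{pp201} for uniform moment bounds are fine.
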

\begin{proof} Suppose that $F_{i,n}=U_{i,n}+\sqrt{-1}V_{i,n}$.
 Thus, it follows from Theorem~3.4 and Remark 3.5 of \cite{nr} and Proposition~\ref{prop25} that the random vectors $(U_{i,n},V_{i,n}),\,i=1,\dots,d $ being asymptotically moment-independent, i.e., for any sequence $(l_1,\dots,l_d)$ and $(k_1,\dots,k_d)$,
\begin{equation}\label{lk}
    \lim_{n\to \infty}\set{E[\prod_{i=1}^d U_{i,n}^{l_i}V_{i,n}^{k_i}]-\prod_{i=1}^d E[U_{i,n}^{l_i}V_{i,n}^{k_i}]}=0
\end{equation}
is equivalent to that $\lim_{n\to \infty}\mathrm{Cov}(\abs{F_{i,n}}^2,\,\abs{F_{j,n}}^2 )=0 $ for every $i\neq j$.
It is easy to check that (\ref{asympot indep}) is equivalent to (\ref{lk}). Thus, (i) is equivalent to (ii).

Using (\ref{daxiao}), it follows from Lemma~\ref{lm31} that (ii) is equivalent to (iii).
\end{proof}
\begin{cor}{\bf (Nourdin-Rosi\'{n}ski joint convergence criterion)}\label{cor4}
  Under notation of Theorem~\ref{thm3}, let $(\eta_i)_{1\le i\le d}$ be a complex random vector such that
    \begin{itemize}
    \item[\textup{(i)}] As $n\to \infty$, $F_{i,n}$ converges in law to $\eta_i$ for each $1\le i\le d$;
    \item[\textup{(ii)}]The random variables $\eta_1,\dots,\eta_d$ are independent;
    \item[\textup{(iii)}] Condition (ii) or (iii) of Theorem~\ref{thm3} holds;
    \item[\textup{(iv)}] The law of $\eta_i$ is determined by its moments for each $1\le i\le d$.
  \end{itemize}
  Then the joint convergence holds, i.e.,
  \begin{equation}\label{limmmtt}
     (F_{1,n},\dots,F_{d,n}) \stackrel{\rm law}{ \longrightarrow} (\eta_1,\dots,\eta_d),\quad \text{  as  } \quad n\to \infty.
  \end{equation}
\end{cor}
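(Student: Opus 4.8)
The plan is to prove the joint convergence (\ref{limmmtt}) by the method of moments, combined with a tightness-and-subsequences argument, so that everything reduces to the already-established marginal convergence and asymptotic moment-independence. First I would secure tightness of the full sequence $\set{(F_{1,n},\dots,F_{d,n})}_{n\ge1}$ in $\Cnum^d\cong\Rnum^{2d}$: each coordinate sequence $\set{F_{i,n}}_n$ converges in law to $\eta_i$ by (i), hence is tight, and a finite family of marginally tight sequences is jointly tight. Next, from the standing hypothesis $\sup_n E[\abs{F_{i,n}}^2]<\infty$ together with the hypercontractivity inequality of Proposition~\ref{pp201}(1), I obtain the uniform moment bounds $\sup_n E[\abs{F_{i,n}}^r]<\infty$ for every $r\ge2$ and every $i$. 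These bounds give uniform integrability of each fixed power $\abs{F_{i,n}}^{l+k}$, so that marginal weak convergence upgrades to convergence of the marginal mixed moments,
\[
E[F_{i,n}^{l}\bar F_{i,n}^{k}]\longrightarrow E[\eta_i^{l}\bar\eta_i^{k}],
\]
and in particular each $\eta_i$ has finite moments of all orders.

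The second step computes the limit of the joint mixed moments. By hypothesis (iii), condition (ii) or (iii) of Theorem~\ref{thm3} holds, so by that theorem the variables $(F_{i,n})_{1\le i\le d}$ are asymptotically moment-independent. Combining this with the marginal moment convergence just obtained and the independence of $\eta_1,\dots,\eta_d$ from (ii), I get, for all finite sequences $(l_i),(k_i)$ of non-negative integers,
\[
E\Big[\prod_{i=1}^d F_{i,n}^{l_i}\bar F_{i,n}^{k_i}\Big]\longrightarrow \prod_{i=1}^d E[\eta_i^{l_i}\bar\eta_i^{k_i}]=E\Big[\prod_{i=1}^d \eta_i^{l_i}\bar\eta_i^{k_i}\Big].
\]
Thus every joint mixed moment of $F_n$ converges to the corresponding moment of the product law $\mathcal{L}(\eta_1)\otimes\cdots\otimes\mathcal{L}(\eta_d)$.

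To conclude I would argue by subsequences. By tightness, any subsequence of $\set{F_n}$ admits a further subsequence converging in law to some vector $G=(G_1,\dots,G_d)$. The uniform moment bounds furnish uniform integrability of the relevant products (via generalized H\"older), so the mixed moments of $G$ coincide with the limits computed above, i.e. with the mixed moments of $(\eta_1,\dots,\eta_d)$. Provided the product limit law is determined by its moments, this forces $G\stackrel{\mathrm{law}}{=}(\eta_1,\dots,\eta_d)$; since every subsequential limit is then identified with the same law, the whole sequence converges, which is exactly (\ref{limmmtt}).

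The main obstacle is precisely the moment-determinacy of the joint limit law on $\Rnum^{2d}$. Hypothesis (iv) only supplies determinacy of each marginal $\eta_i$ — equivalently, determinacy in $\Rnum^2$ of $(\RE\eta_i,\IM\eta_i)$, since the complex moments $E[\eta_i^{l}\bar\eta_i^{k}]$ and the real mixed moments of $(\RE\eta_i,\IM\eta_i)$ carry the same information — and it is a genuine subtlety of the multidimensional moment problem that determinacy of marginals together with independence does \emph{not} in general imply determinacy of the product. The clean way around this is to read (iv) as a Carleman-type growth condition on the one-dimensional marginal moments and then invoke a multivariate determinacy theorem of Petersen/Carleman type, under which a product of marginals each satisfying Carleman's condition is determinate on $\Rnum^{2d}$. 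This determinacy step is where the most care is required; the remaining ingredients are routine applications of tightness, uniform integrability, and the method of moments.
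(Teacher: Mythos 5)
Your overall architecture --- joint tightness from marginal tightness, uniform moment bounds, convergence of the marginal mixed moments by uniform integrability, identification of the limiting joint moments with the product moments via the asymptotic moment-independence of Theorem~\ref{thm3}, and a subsequence/method-of-moments conclusion --- is exactly the paper's route (the paper packages the uniform moment bounds as Proposition~\ref{pp201}(2), deducing them from tightness rather than from hypercontractivity plus the standing $L^2$ bound, and defers the rest to the proof of Corollary~3.6 of \cite{nr}). The gap is in the one step you yourself flag: the moment-determinacy of the product law $\mathcal{L}(\eta_1)\otimes\cdots\otimes\mathcal{L}(\eta_d)$ on $\Cnum^d\cong\Rnum^{2d}$. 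Your proposed fix --- reading hypothesis (iv) as a Carleman-type growth condition and invoking a Nussbaum/Carleman multivariate determinacy theorem --- does not prove the corollary as stated: moment-determinacy of a law does not imply Carleman's condition, so you would be establishing the conclusion only under a strictly stronger hypothesis than (iv).

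The ingredient the paper uses instead is Theorem~3 of Petersen \cite{peter}: if every coordinate projection of a measure on $\Rnum^n$ is determinate, then the measure itself is determinate. In particular a product of determinate one-dimensional laws is determinate, so your blanket assertion that ``determinacy of marginals together with independence does not in general imply determinacy of the product'' is contradicted by Petersen's theorem in the real, one-dimensional-marginal setting; that theorem is precisely what closes this step with hypothesis (iv) as written and with no Carleman condition. What the paper actually needs --- and asserts without proof --- is the extension of Petersen's theorem to $\Cnum^d$, i.e.\ to two-dimensional block marginals, since each $\eta_i$ is a complex (hence planar) random variable and (iv) is a determinacy assumption on a measure on $\Rnum^2$. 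Your instinct that this block version is more delicate than the scalar one is not unreasonable, since determinacy of a planar measure does not automatically yield the polynomial-density statements underlying Petersen's argument; but the intended resolution of the determinacy step is Petersen's theorem, not a strengthening of hypothesis (iv).
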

\begin{rem}
 In the above condition (iv), we do not assume that the laws of both the real part and the imaginary part of $\eta_i$ are determined by their moments. This is the difference between Corollary~3.6 of \cite{nr} and Corollary~\ref{cor4}.
\end{rem}
\begin{proof}
Note that Theorem~3 of \cite{peter} still holds for probability measures on $\Cnum^d $. Precisely stated, if each of $d$ coordinate projections $P_i(\mu)$ of a probability measure $\mu$ on $\Cnum^d $ is uniquely determined by its moments sequence, then the measure $\mu$ is also uniquely determined by its moments. By applying Proposition~\ref{pp201} (2), we can show the desired conclusion by means of modifying the proof of Corollary~3.6 of \cite{nr} slightly.
\end{proof}
In analogy with the characterization of moment-independence of limits of real multiple Wiener-It\^{o} integrals \cite[Theorem 3.1]{nr}, we can deduce the following corollary for complex multiple Wiener-It\^{o} integrals from the above proof.
\begin{cor}{\bf (Nourdin-Rosi\'{n}ski moment-independence criterion of limits)}\label{cor5}
    Under notations of Theorem~\ref{thm3} and Corollary~\ref{cor4}. Assume that (\ref{limmmtt}) holds. Then $\eta_i$'s admit moments of all orders and the following conditions are equivalent:
     \begin{itemize}
    \item[\textup{($\alpha$)}] The random variables $(\eta_i)_{1\le i \le d}$ are moment-independent, i.e.,
    $$E[\prod_{i=1}^d \eta_{i}^{l_i}\bar{\eta}_{i}^{k_i}]= \prod_{i=1}^d E[\eta_{i}^{l_i}\bar{\eta}_{i}^{k_i}],\quad  \forall  k_1,\dots,k_d,\,l_1,\dots,l_d \in \Nnum;$$
    \item[\textup{($\beta$)}] $\lim_{n\to\infty}\mathrm{Cov}(\abs{F_{i,n}}^2,\,\abs{F_{j,n}}^2)=0$ for every $i\neq j$.
  \end{itemize}
Moreover, if condition (iv) of Corollary~\ref{cor4} is satisfied, then ($\alpha$) is equivalent to that
    \begin{itemize}
    \item[\textup{($\gamma$)}] The random variables $(\eta_i)_{1\le i \le d}$ are independent.
  \end{itemize}
\end{cor}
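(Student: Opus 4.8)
The plan is to deduce everything from Theorem~\ref{thm3} and from the argument already used for Corollary~\ref{cor4}, the only genuinely new ingredient being the passage of moments to the limit: once I know that the relevant moments of $F_{i,n}$ converge to those of $\eta_i$, each asymptotic statement in $n$ becomes an exact statement about the limiting vector $(\eta_1,\dots,\eta_d)$.

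First I would establish moment convergence. Since each $F_{i,n}$ lives in the fixed chaos $(a_i,b_i)$, the hypercontractivity inequality of Proposition~\ref{pp201}~(1) together with the uniform bound (\ref{tight}) gives $\sup_n E\abs{F_{i,n}}^r<\infty$ for every $r>0$; by H\"{o}lder's inequality the same uniform bound holds for every monomial $\prod_{i} F_{i,n}^{l_i}\bar{F}_{i,n}^{k_i}$, so these families are uniformly integrable. Fatou's lemma then shows $E\abs{\eta_i}^r<\infty$ for all $r$, i.e. the $\eta_i$ admit moments of all orders. Moreover, combining uniform integrability with the marginal convergences in law and the joint convergence (\ref{limmmtt}) (and applying the continuous mapping theorem to the polynomial functional, splitting into real and imaginary parts) yields $E[F_{i,n}^{l_i}\bar{F}_{i,n}^{k_i}]\to E[\eta_i^{l_i}\bar{\eta}_i^{k_i}]$ and $E[\prod_i F_{i,n}^{l_i}\bar{F}_{i,n}^{k_i}]\to E[\prod_i \eta_i^{l_i}\bar{\eta}_i^{k_i}]$ as $n\to\infty$.

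Next I would prove $(\alpha)\Leftrightarrow(\beta)$. Condition $(\beta)$ is literally condition (ii) of Theorem~\ref{thm3}, hence by that theorem it is equivalent to the asymptotic moment-independence (\ref{asympot indep}); letting $n\to\infty$ in (\ref{asympot indep}) and using the moment convergence of the previous step produces exactly $(\alpha)$. For the converse, I would specialize $(\alpha)$ to the exponents $l_i=k_i=l_j=k_j=1$ and all others zero, which reads $E[\abs{\eta_i}^2\abs{\eta_j}^2]=E[\abs{\eta_i}^2]E[\abs{\eta_j}^2]$, i.e. $\mathrm{Cov}(\abs{\eta_i}^2,\abs{\eta_j}^2)=0$; since moment convergence gives $\lim_n \mathrm{Cov}(\abs{F_{i,n}}^2,\abs{F_{j,n}}^2)=\mathrm{Cov}(\abs{\eta_i}^2,\abs{\eta_j}^2)$ for $i\neq j$, condition $(\beta)$ follows. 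Finally, under condition (iv) of Corollary~\ref{cor4}, the implication $(\gamma)\Rightarrow(\alpha)$ is immediate since independence factors every joint moment, while for $(\alpha)\Rightarrow(\gamma)$ I would reuse the device from the proof of Corollary~\ref{cor4}: by the multivariate moment-determinacy result (Theorem~3 of the cited reference, extended to $\Cnum^d$), the marginal determinacy assumed in (iv) forces the joint law $\mu$ of $(\eta_1,\dots,\eta_d)$ to be determined by its moments; the product measure $\nu=\bigotimes_i \mathrm{Law}(\eta_i)$ has moments $\prod_i E[\eta_i^{l_i}\bar{\eta}_i^{k_i}]$, which by $(\alpha)$ coincide with those of $\mu$, so determinacy yields $\mu=\nu$, i.e. the $\eta_i$ are independent.

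I expect the main obstacle to be the determinacy step: one must ensure that moment-determinacy of all $d$ marginals propagates to the joint law on $\Cnum^d$, which is precisely the nontrivial multivariate strengthening invoked for Corollary~\ref{cor4} and is what lets $(\alpha)$ be upgraded from equality of moments to genuine independence. The moment-convergence bookkeeping of the first step, though routine, is equally indispensable, since without uniform integrability the asymptotic identity (\ref{asympot indep}) cannot be transferred to the limiting variables $\eta_i$.
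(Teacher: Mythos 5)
Your argument is correct and follows essentially the route the paper intends: the paper deduces this corollary from the proof of Corollary~\ref{cor4}, i.e.\ uniform moment bounds and convergence in law to pass moments to the limit, Theorem~\ref{thm3} for the equivalence of $(\beta)$ with asymptotic moment-independence, and the $\Cnum^d$ extension of Petersen's theorem to upgrade $(\alpha)$ to genuine independence under condition (iv). The only cosmetic difference is that you derive the uniform moment bounds from hypercontractivity (Proposition~\ref{pp201}(1)) together with (\ref{tight}), whereas the paper invokes tightness via Proposition~\ref{pp201}(2); both are valid here.
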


\vskip 0.2cm {\small {\bf  Acknowledgements}}\   This work was
supported by  NSFC( No.11101137) and Natural Science Foundation of Hunan Province (No.2015JJ2055).

\end{document}